\definecolor{darkgreen}{rgb}{0.00,0.50,0.10}
\definecolor{lightgreen}{rgb}{0.20,0.70,0.30}
\newtheorem{theorem}{Theorem}
\newtheorem{lemma}[theorem]{Lemma} 
\newcommand{\By}[2]{\overset{\mbox{\tiny{#1}}}{#2}}
\newcommand{\eqBy}[1]{    \By{#1}{=} }
\title[Nullstellensatz over integral domains]{
Proof of the combinatorial nullstellensatz over integral domains in the spirit of Kouba
}
\author{Peter Heinig}
\address{Zentrum Mathematik, Lehr- und Forschungseinheit M9 f\"{u}r Angewandte Geometrie und Diskrete Mathematik, 
Technische Universit\"at M\"unchen, Boltzmannstra\ss{}e~3, D-85747 Garching bei M\"unchen, Germany} 
\email{heinig@ma.tum.de}
\thanks{The author was supported by a scholarship from the Max Weber-Programm Bayern and 
by the ENB graduate program TopMath}
\begin{document}
\begin{abstract}
It is shown that by eliminating duality theory of vector spaces from a recent proof of 
Kouba (O. Kouba, A duality based proof of the Combinatorial Nullstellensatz. Electron. J. Combin. 16 (2009), \#N9) 
one obtains a direct proof of the nonvanishing-version of Alon's Combinatorial Nullstellensatz for 
polynomials over an arbitrary integral domain. The proof relies on Cramer's rule and Vandermonde's determinant 
to explicitly describe a map used by Kouba in terms of cofactors of a certain matrix.

That the Combinatorial Nullstellensatz is true over integral domains is a 
well-known fact which is already contained in Alon's work and emphasized in 
recent articles of Micha\l{}ek and Schauz; the sole purpose of the present note 
is to point out that not only is it not necessary to invoke duality of vector 
spaces, but by not doing so one easily obtains a more general result.

\medskip
\noindent
Mathematics Subject Classification 2010:  13G05, 15A06
\end{abstract}

\maketitle

\section{Introduction}
The Combinatorial Nullstellensatz is a very useful theorem (see ~\cite{AlonNss}) 
about multivariate polynomials over an integral domain which bears some resemblance to the 
classical Nullstellensatz of Hilbert. 

\begin{theorem}[Alon, {C}ombinatorial {N}ullstellensatz (ideal-containment-version), Theorem 1.1 in ~\cite{AlonNss}]\label{thm:idealNss} 
Let $K$ be a field, $R\subseteq K$ a subring,  $f\in R[x_1,\dotsc, x_n]$, $S_1,\dotsc, S_n$ arbitrary nonempty subsets of $K$, 
and $g_i:=\prod_{s\in S_i}(x_i-s)$ for every $1\leq i \leq n$. If $f(s_1,\dotsc, s_n)=0$ 
for every $(s_1,\dotsc, s_n)\in S_1\times\dotsm\times S_n$, then there exist polynomials $h_i\in R[x_1,\dotsc, x_n]$ with the property 
that $\deg(h_i)\leq \deg(f) - \deg(g_i)$ for every $1\leq i \leq n$, and $f=\sum_{i=1}^n h_i g_i$.
\end{theorem}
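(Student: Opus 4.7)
The plan is to prove Theorem~\ref{thm:idealNss} by the classical reduction-and-grid-vanishing argument, which is the most direct route to the ideal-containment version; the paper itself is pointed toward a linear-algebra/Kouba style derivation of the nonvanishing variant, but the two are cleanly separable and I would handle Theorem~\ref{thm:idealNss} on its own.

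Set $d_i := |S_i| = \deg(g_i)$. Each $g_i$ is monic of degree $d_i$ in $x_i$, so write $g_i = x_i^{d_i} - p_i$ with $p_i \in R[x_i]$ and $\deg(p_i) < d_i$. I would first define a reduction procedure on $f$: for every monomial $c\,x_1^{a_1}\cdots x_n^{a_n}$ of $f$ in which some $a_i \geq d_i$, use
\[
 x_i^{a_i} \;=\; x_i^{a_i-d_i}\bigl(g_i + p_i\bigr),
\]
transferring the $g_i$-summand into $h_i$ and replacing the monomial by one of strictly smaller $x_i$-degree (and of total degree not exceeding the original). Iterating produces a decomposition
\[
 f \;=\; \sum_{i=1}^n h_i g_i + r, \qquad h_i \in R[x_1,\dotsc,x_n], \qquad \deg_{x_i}(r) < d_i \text{ for every } i.
\]
Since each reduction transfers a term of total degree $\leq \deg(f)$ into $h_i g_i$, and $\deg(g_i) = d_i$, the summand added to $h_i$ has total degree $\leq \deg(f) - d_i$; processing monomials in decreasing total-degree order ensures the bound $\deg(h_i) \leq \deg(f) - \deg(g_i)$ survives iteration.

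Because $g_i$ vanishes on $S_i$, evaluation of the decomposition at any $(s_1,\dotsc,s_n) \in S_1 \times \dotsm \times S_n$ yields $r(s_1,\dotsc,s_n) = f(s_1,\dotsc,s_n) = 0$. The conclusion $r = 0$ then follows from the \emph{grid-vanishing lemma}: any $r \in R[x_1,\dotsc,x_n]$ with $\deg_{x_i}(r) < d_i$ for all $i$ which vanishes on $S_1 \times \dotsm \times S_n$ must be the zero polynomial. I would prove this by induction on $n$, the base case being the standard fact that a nonzero univariate polynomial of degree $< d_1$ over an integral domain cannot have $d_1$ distinct roots; the inductive step expands $r$ as a polynomial in $x_n$ whose coefficients lie in $R[x_1,\dotsc,x_{n-1}]$, applies the induction hypothesis after fixing $x_n \in S_n$, and then returns to the base case.

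The main obstacle is the degree bookkeeping during the reduction: one must check that the specific $h_i$ produced really satisfy $\deg(h_i) \leq \deg(f) - \deg(g_i)$, not just the weaker bound $\deg(h_i g_i) \leq \deg(f)$. This is what forces a careful ordering of the reductions (or, equivalently, a slightly cleaner inductive formulation on the maximum $x_i$-degree of $f$). The grid-vanishing lemma itself is the sole place the integral-domain hypothesis is used, and is otherwise routine; the polynomial division step works over any commutative ring.
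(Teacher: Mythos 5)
First, a point of orientation: the paper never proves Theorem~\ref{thm:idealNss}. It is quoted from Alon as background, and the entire point of the note is to give a direct proof of Theorem~\ref{thm:nonvanishingNss} (via Lemma~\ref{lem:vandermonde} and Kouba's linear form $\Phi$) that \emph{bypasses} the ideal-containment version. So there is no in-paper argument to compare against, and your proposal has to stand on its own. What you describe is precisely Alon's original proof: division with remainder by the polynomials $g_i$, which are monic in $x_i$, followed by the grid-vanishing lemma to kill the remainder. That argument is sound, and the degree bookkeeping you worry about resolves itself more easily than you suggest: each reduction replaces a monomial by monomials of strictly smaller total degree, so every monomial that ever appears during the process has total degree at most $\deg(f)$, and hence every summand $x_1^{a_1}\dotsm x_i^{a_i-d_i}\dotsm x_n^{a_n}$ deposited into $h_i$ has total degree at most $\deg(f)-d_i$, regardless of the order in which monomials are processed.

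The one genuine gap concerns the ring in which the $h_i$ land. As the theorem is stated here, the $S_i$ are arbitrary subsets of $K$, so the polynomials $p_i$ in your rewriting $g_i=x_i^{d_i}-p_i$ have coefficients in $K$, not necessarily in $R$; your reduction therefore produces $h_i$ and $r$ in $K[x_1,\dotsc,x_n]$, and the claim $h_i\in R[x_1,\dotsc,x_n]$ that you display is not justified by the argument. This is partly an imprecision inherited from the statement itself: in Alon's Theorem~1.1 the conclusion $h_i\in R[x_1,\dotsc,x_n]$ is asserted under the additional hypothesis that the $g_i$ lie in $R[x_1,\dotsc,x_n]$ (for instance when each $S_i\subseteq R$), in which case every step of your reduction stays inside $R$ and the proof goes through verbatim. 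You should either add that hypothesis or state explicitly that without it the argument only yields $h_i\in K[x_1,\dotsc,x_n]$. The grid-vanishing lemma and its induction on $n$ are fine as sketched.
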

\begin{theorem}[Alon, {C}ombinatorial {N}ullstellensatz (nonvanishing-version), Theorem 1.2 in ~\cite{AlonNss}]\label{thm:nonvanishingNss} 
Let $K$ be a field, $R\subseteq K$ a subring, and $f\in R[x_1,\dotsc, x_n]$. 
Let $c\cdot x_1^{d_1}\dotsm x_n^{d_n}$ be a term in $f$ with $c\neq 0$ whose degree $d_1+\dotsm+d_n$ is 
maximum among all degrees of terms in $f$. Then every product $S_1\times\dotsm\times S_n$, where each $S_i$ is an arbitrary finite 
subset of $R$ satisfying $|S_i|=d_i+1$, contains at least one point $(s_1,\dotsc, s_n)$ with $f(s_1,\dotsc,s_n)\neq 0$.
\end{theorem}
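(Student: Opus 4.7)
My plan is to follow Kouba's strategy for Theorem~\ref{thm:nonvanishingNss} but to replace his abstract dual-basis construction by an explicit formula obtained from Cramer's rule applied to a Vandermonde system, so that the argument works verbatim over an arbitrary integral domain $R$.

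The first step is a reduction. For each $i$, the monic polynomial $g_i(x_i) := \prod_{s \in S_i}(x_i - s) \in R[x_i]$ has degree $d_i + 1$, and division in $R[x_i]$ expresses $x_i^{a_i}$ as $q(x_i)\,g_i(x_i) + r(x_i)$ with $r \in R[x_i]$ of degree at most $d_i$. Applying this reduction to every $x_i^{a_i}$ with $a_i > d_i$ in every monomial of $f$, and iterating over $i$, would yield $\tilde f \in R[x_1, \dotsc, x_n]$ with $\deg_{x_i}(\tilde f) \leq d_i$ for every $i$ and $\tilde f(s_1, \dotsc, s_n) = f(s_1, \dotsc, s_n)$ on $S_1 \times \dotsm \times S_n$. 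Since the reduction of a monomial $x^a = x_1^{a_1} \dotsm x_n^{a_n}$ leaves its $x_j$-degree unchanged when $a_j \leq d_j$ and lowers it to at most $d_j$ when $a_j > d_j$, the resulting monomials have total degree at most $\sum_j \min(a_j, d_j)$. For any such monomial to equal $x_1^{d_1} \dotsm x_n^{d_n}$ one needs $a_j \geq d_j$ for every $j$, which combined with $\sum_j a_j \leq \deg f = \sum_j d_j$ forces $a_j = d_j$. Hence the only monomial of $f$ contributing to the $x_1^{d_1} \dotsm x_n^{d_n}$-coefficient of $\tilde f$ is $c \cdot x_1^{d_1} \dotsm x_n^{d_n}$ itself, and that coefficient in $\tilde f$ is again $c$.

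The second step is the single-variable coefficient-extraction identity. Enumerate $S_i = \{s_{i,0}, \dotsc, s_{i,d_i}\}$ and let $V_i = \bigl(s_{i,j}^{k}\bigr)_{0 \leq j,k \leq d_i}$, with $\Delta_i := \det V_i = \prod_{0 \leq j < k \leq d_i}(s_{i,k} - s_{i,j})$. For $p(x_i) = \sum_{k=0}^{d_i} a_k x_i^k$, the vector $\bigl(p(s_{i,j})\bigr)_j$ equals $V_i \cdot (a_k)_k$, so Cramer's rule---valid over any commutative ring---yields
\[
\Delta_i \cdot a_{d_i} \;=\; \sum_{j=0}^{d_i} \epsilon_{i,j}\, p(s_{i,j}),
\]
where $\epsilon_{i,j} \in R$ equals $(-1)^{d_i + j}$ times the cofactor of $V_i$ obtained by deleting row $j$ and column $d_i$ (itself a Vandermonde determinant in the remaining $s_{i,k}$). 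Applying this identity iteratively---regarding $\tilde f$ successively as a polynomial in $x_n$, then $x_{n-1}$, and so on, with coefficients in the appropriate smaller polynomial ring over $R$---I would arrive at
\[
\Delta_1 \dotsm \Delta_n \cdot c \;=\; \sum_{j_1=0}^{d_1} \dotsm \sum_{j_n=0}^{d_n} \epsilon_{1,j_1} \dotsm \epsilon_{n,j_n}\, \tilde f(s_{1,j_1}, \dotsc, s_{n,j_n}).
\]
If $f$, and hence $\tilde f$, vanished on every point of $S_1 \times \dotsm \times S_n$, the right-hand side would be $0$, while the left-hand side is nonzero in $R$: each $\Delta_i$ is nonzero because the $s_{i,j}$ are distinct elements of an integral domain, $c$ is nonzero by hypothesis, and $R$ is itself an integral domain.

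The step I expect to require the most care is the first---verifying that the coefficient of $x_1^{d_1} \dotsm x_n^{d_n}$ really is preserved by the reduction---while the Cramer identity in step two is a mechanical determinant computation and the iteration in step three is routine induction on $n$. The conceptual content is the observation that Kouba's linear functional ``extract the coefficient of $x_i^{d_i}$'', originally obtained from duality in a finite-dimensional $K$-vector space, admits a canonical cofactor description that is defined over any commutative ring and remains non-degenerate (in the sense that $\Delta_i \neq 0$) over any integral domain.
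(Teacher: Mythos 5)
Your proposal is correct, and its engine is the same as the paper's: Cramer's rule applied to the Vandermonde matrix $(s_{i,j}^k)$ produces explicit cofactors $\epsilon_{i,j}\in R$ realizing the ``extract the coefficient of $x_i^{d_i}$'' functional on the grid, which is exactly the content of Lemma~\ref{lem:vandermonde} (your $\epsilon_{i,j}$ are the paper's $\lambda_j^{(S)}=\alpha_{mj}$, and your single-variable identity, read off on monomials, is precisely \eqref{eq:smallExponent}--\eqref{eq:matchingExponents}). The one structural difference is your preliminary reduction of $f$ modulo the monic polynomials $g_i=\prod_{s\in S_i}(x_i-s)$ to obtain $\tilde f$ with $\deg_{x_i}\tilde f\le d_i$. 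That step is carried out correctly --- including the verification that the coefficient of $x_1^{d_1}\dotsm x_n^{d_n}$ survives the reduction --- but the paper shows it is unnecessary: the product functional $\Phi(g)=\sum\lambda_{s_1}^{(S_1)}\dotsm\lambda_{s_n}^{(S_n)}g(s_1,\dotsc,s_n)$ can be applied to the \emph{unreduced} $f$, because $\Phi$ of a monomial factors as $\prod_k\bigl(\sum_{s_k}\lambda_{s_k}^{(S_k)}s_k^{d_k'}\bigr)$ and vanishes as soon as \emph{one} exponent satisfies $d_i'<d_i$, and the maximality of $d_1+\dotsb+d_n$ forces every non-leading term of $f$ to have such an exponent (the same min/degree count you use inside your reduction step, just deployed directly on $\Phi$). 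So your route costs an extra division-with-remainder argument in exchange for only ever evaluating the functional on polynomials of small partial degree; the paper's route is shorter but needs the small observation that a single vanishing factor kills the whole product even when other exponents exceed $d_j$. One cosmetic slip: what you call ``$(-1)^{d_i+j}$ times the cofactor'' should read ``times the \emph{minor}''; the cofactor already carries the sign. This does not affect the argument.
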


Three comments are in order. First, talking about subrings of a field is equivalent to talking about integral domains: every 
subring of a field clearly is an integral domain, and, conversely, every integral domain $R$ is (isomorphic to) a subring 
of its field of fractions $\mathrm{Quot}(R)$. Second, strictly speaking, rings are mentioned in ~\cite{AlonNss} only in 
Theorem \ref{thm:idealNss}, but Alon's proof in ~\cite{AlonNss} of Theorem \ref{thm:nonvanishingNss} is valid for polynomials over 
integral domains as well. Third, it is intended that the $S_i$ are allowed to be subsets of $K$ in Theorem ~\ref{thm:idealNss} but required 
to be subsets of $R$ in Theorem ~\ref{thm:nonvanishingNss}, since this is the slightly stronger formulation: if Theorem ~\ref{thm:nonvanishingNss} 
is true as it is formulated here, then by invoking it with $R=K$ and by viewing an $f\in R[x_1,\dotsc,x_n]$, $R$ being a subring of $K$, as a 
polynomial in $K[x_1,\dotsc,x_n]$, it is true as well with the $S_i$ being allowed to be arbitrary subsets of $K$. 

In ~\cite{AlonNss}, Theorem ~\ref{thm:nonvanishingNss} was deduced from Theorem \ref{thm:idealNss}.
In ~\cite{Kouba}, Kouba gave a beautifully simple and direct proof of the 
nonvanishing-version of the Combinatorial Nullstellensatz, bypassing the use of the ideal-containment-version. 
Kouba's argument was restricted to the case of polynomials over a field and at one step applied a suitably 
chosen linear form on the vector space $K[x_1,\dotsc, x_n]$ to the given polynomial $f$ in Theorem ~\ref{thm:nonvanishingNss}. 

However, for Kouba's idea to work, it is not necessary to have recourse to duality theory 
of vector spaces and in the following section it will be shown how to make Kouba's idea work without it, 
thus obtaining a direct proof of the full Theorem ~\ref{thm:nonvanishingNss}. 

Finally, two relevant recent articles ought to be mentioned. A very short direct proof 
of Theorem ~\ref{thm:nonvanishingNss} was given by Micha\l{}ek in ~\cite{Michalek} who explicitly 
remarks that the proof works for integral domains as well. Moreover, the differences
 $\bigr\{s-s':\; \{s,s'\}\in \genfrac{(}{)}{0pt}{}{S_k}{2} \bigl\}$ in the proof below play a similar role in 
Micha\l{}ek's proof. In ~\cite{Schauz}, Schauz obtained far-reaching generalizations and sharpenings 
of Theorem ~\ref{thm:nonvanishingNss}, expressly working with integral domains and generalizations 
thereof throughout the paper.

\section{Proof of Theorem  ~\ref{thm:nonvanishingNss}}
The proof of the Theorem ~\ref{thm:nonvanishingNss} will be based on the following simple lemma.
\begin{lemma}\label{lem:vandermonde} Let $R$ be an integral domain. Let $S=\{s_1,\dotsc,s_m\}\subseteq R$ be an arbitrary 
finite subset. Then there exist elements $\lambda_1^{(S)},\dotsc, \lambda_m^{(S)}$ of $R$ such that 
\begin{gather} \lambda_1^{(S)} \cdot (1, s_1, s_1^2,\dotsc, s_1^{m-1}) + \dotsm + \lambda_m^{(S)} \cdot (1,s_m,s_m^2,\dotsc, s_m^{m-1}) \notag\\
= (0,0,0,\dotsc,0,\prod_{1\leq i < j \leq m} (s_i-s_j)). 
\end{gather}
\end{lemma}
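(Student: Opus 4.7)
The plan is to recognize the statement as a linear system and solve it via Cramer's rule. Let $V$ denote the $m \times m$ Vandermonde matrix whose $i$-th row is $(1, s_i, s_i^2, \ldots, s_i^{m-1})$, and put $D := \prod_{1 \le i < j \le m}(s_i - s_j)$. Then the conclusion reads simply $\lambda^{(S)} V = (0, \ldots, 0, D)$. Since $S$ is a set, the $s_i$ are pairwise distinct, so each factor $s_j - s_i$ is a nonzero element of the integral domain $R$, and therefore $\det V = \prod_{i<j}(s_j - s_i) = (-1)^{\binom{m}{2}} D$ is nonzero in $R$ and a unit in the fraction field $K := \mathrm{Quot}(R)$.

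First, I would pass to $K$ and invoke Cramer's rule, giving $\lambda_i^{(S)} = \det(V^{(i)})/\det V \in K$, where $V^{(i)}$ denotes $V$ with its $i$-th row replaced by the target vector $(0, \ldots, 0, D)$. Second, I would expand $\det(V^{(i)})$ along this modified row: only its last entry is nonzero, producing $\det(V^{(i)}) = (-1)^{i+m} D \cdot W_i$, where $W_i$, the $(i,m)$-minor of $V^{(i)}$, is itself a Vandermonde determinant in the $m-1$ elements $\{s_j : j \ne i\}$, namely $W_i = \prod_{\substack{1 \le j < k \le m\\ j, k \ne i}}(s_k - s_j)$. Third, substituting $D = (-1)^{\binom{m}{2}} \det V$ into the Cramer quotient makes $\det V$ cancel, yielding the explicit formula
\[
\lambda_i^{(S)} \;=\; (-1)^{i + m + \binom{m}{2}} \prod_{\substack{1 \le j < k \le m \\ j,\, k \ne i}}(s_k - s_j),
\]
which manifestly lies in $R$.

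I expect the main obstacle to be not conceptual but purely combinatorial, namely the bookkeeping of signs coming from the cofactor expansion, from the definition of $D$ versus $\det V$, and from the reindexing of the restricted Vandermonde product. Once the explicit formula above is adopted as the \emph{definition} of $\lambda_i^{(S)} \in R$, the desired $m$ equations are automatically satisfied, since they are witnessed by Cramer's rule applied over $K$ and the equality holds in $K \supseteq R$. The essence of the lemma is thus the observation that $\det V$ divides every Cramer cofactor \emph{inside} $R$, a divisibility made transparent by the fact that the target vector's sole nonzero entry is, up to sign, $\det V$ itself.
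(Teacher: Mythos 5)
Your proof is correct and follows essentially the same route as the paper's: set up the Vandermonde system, apply Cramer's rule over $\mathrm{Quot}(R)$, and observe that because the target vector's only nonzero entry is (up to sign) $\det V$ itself, the determinant cancels and the solution is a cofactor lying in $R$. The only cosmetic difference is that you work with the transposed matrix and expand the minor explicitly as the restricted Vandermonde product $\prod_{j<k,\; j,k\neq i}(s_k-s_j)$ with its sign, whereas the paper stops at identifying $\lambda_j^{(S)}=\alpha_{mj}$.
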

\begin{proof} Let $[m]:=\{1,\dotsc, m\}$. Define $b$ to be the right-hand side of the claimed equation, 
taken as a column vector, and let $A=(a_{ij})_{(i,j)\in [m]^2}$ be the Vandermonde matrix defined by  $a_{ij}:=s_{j}^{i-1}$. 
Then the statement of the lemma is equivalent to the existence of a solution $\lambda^{(S)}\in R^m$ of the system of linear equations 
$A \lambda^{(S)} = b.$ By the well-known formula for the determinant of a 
Vandermonde matrix (see ~\cite{LangAlgebra}, Ch. XIII, \S 4, example after Prop. 4.10), $\det(A)=\prod_{1\leq i < j \leq m} (s_i-s_j)$.

Since $S$ is a set, all factors of this product are nonzero, and since $R$ has 
no zero divisors, the determinant is therefore nonzero as well. Now 
let $\alpha_{ij}$ be the cofactors of $A$, i.e. $\alpha_{ij}:=(-1)^{i+j} \det(A^{(ij)})$, 
where $A^{(ij)}$ is the $(m-1)\times (m-1)$ matrix 
obtained from $A$ by deleting the $i$-th row and the $j$-th column (see ~\cite{BirkhoffMacLane}, Ch. IX, \S 3, before Lemma 1). 
By Cramer's rule (see Ch. IX, \S 3, Corollary 2 of Theorem 6 in ~\cite{BirkhoffMacLane} or Theorem 4.4 in ~\cite{LangAlgebra}), for 
every $j\in [m]$,
\[  \det(A) \cdot \lambda_j^{(S)} = \sum_{i=1}^m \alpha_{ij} b_i.\]
Using $b_m=\det(A)$, $b_i=0$ for every $1\leq i < m$, and the commutativity of an integral domain, this reduces to 
\[ \det(A) \cdot \bigl( \lambda_j^{(S)} - \alpha_{mj}\bigr) = 0.\] 
Hence, since $\det(A)\neq 0$ and $R$ has no zero divisors, if follows that 
the cofactors  $\lambda_j^{(S)}=\alpha_{mj}\in R$ provide explicit elements 
with the desired property.
\end{proof} 
Using this lemma, Kouba's argument may now be carried out without change in the setting of integral domains.
\begin{proof}[Proof of Theorem ~ \ref{thm:nonvanishingNss}] Let $R$ be an arbitrary integral domain and $f\in R[x_1,\dotsc, x_n]$ be 
an arbitrary polynomial. Let $d_1,\dotsc, d_n\in \mathbb{N}_{\geq 0}$ be the exponents of a term $c\cdot  x_1^{d_1}\dotsm x_{n}^{d_n}$ 
with $c\neq 0$ which has maximum degree in $f$. For each $k\in [n]$, choose an arbitrary finite 
subset  $S_k\subseteq R$ and apply Lemma ~\ref{lem:vandermonde} with $S=S_k$ and $m=|S|=d_k+1$ to obtain a family of 
elements $(\lambda_{s_k}^{(S_k)})_{s_k\in S_k}$ of $R$ (where in order to avoid double indices the coefficients $\lambda$ 
are now being indexed by the elements of $S_k$ directly, not by an enumeration of each $S_k$) with the property that
\begin{align}
\sum_{s_k\in S_k} \lambda_{s_k}^{(S_k)} \cdot s_{k}^\ell & = 0\quad \text{ for every $\ell\in \{0,\dotsc, d_k-1\}$}, \label{eq:smallExponent}\\
\sum_{s_k\in S_k} \lambda_{s_k}^{(S_k)} \cdot s_{k}^{d_k} & = \prod_{\{s,s'\} \in \genfrac{(}{)}{0pt}{}{S_k}{2}} (s-s') =: r_k \in R\backslash\{ 0 \}\label{eq:matchingExponents}.
\end{align}

Using the coefficient families $(\lambda_{s_k}^{(S_k)})_{s_k\in S_k}$, define, \`a la Kouba, the map
\begin{align}
  \Phi:\quad & R[x_1,\dotsc, x_n]  \longrightarrow R \notag\\
 g &\longmapsto \sum_{(s_{1},\dotsc,s_{n})\in S_1\times \dotsm \times S_n} \lambda_{s_1}^{(S_1)}\dotsm\lambda_{s_n}^{(S_n)} \cdot g(s_{1},\dotsc,s_{n}).
\label{def:koubaMap}
\end{align}
Due to the commutativity of an integral domain, $\Phi$ is an $R$-linear form on the $R$-module $R[x_1,\dotsc, x_n]$, hence 
its value $\Phi(f)$ on a polynomial $f$ can be evaluated termwise as 
\begin{align}
\Phi(f) = \sum_{c\cdot t\text{ a term in $f$}} c\cdot \Phi(t).
\end{align}
If $t = c\cdot x_1^{d_1'}\dotsm x_n^{d_n'}$ is an arbitrary term in $R[x_1,\dotsc,x_n]$, then
\begin{align}
\Phi(t) = c\cdot \Phi (x_1^{d_1'}\dotsm x_n^{d_n'}) 
& = c \cdot\sum_{(s_{1},\dotsc,s_{n})\in S_1\times \dotsm \times S_n} \lambda_{s_1}^{(S_1)}\dotsm\lambda_{s_n}^{(S_n)} \cdot  s_{1}^{d_1'} \dotsm s_{n}^{d_n'} \notag\\
& = c \cdot \sum_{s_{1}\in S_1}\dotsm \sum_{s_{n} \in S_n} \lambda_{s_1}^{(S_1)}\dotsm\lambda_{s_n}^{(S_n)} \cdot s_{1}^{d_1'} \dotsm s_{n}^{d_n'} \notag\\
& = c \cdot \prod_{k=1}^n\biggl ( \sum_{s_k \in S_k} \lambda_{s_k}^{(S_k)} s_k^{d_k'},\biggr )\label{eq:productExpansion}
\end{align}
where in the last step again use has been made of the commutativity of an integral domain. By ~\eqref{eq:productExpansion} 
and ~\eqref{eq:smallExponent} it follows that for every term $t$, if there is at least one exponent $d_i'$ with $d_i'<d_i$, 
then $\Phi(t)=0$. Moreover, by the choice of the term $c\cdot x_1^{d_1}\dotsm x_{n}^{d_n}$, every 
term $c'\cdot x_1^{d_1'}\dotsm x_{n}^{d_n'}$ of $f$ which is different from the term $c\cdot x_1^{d_1}\dotsm x_{n}^{d_n}$ must, 
even if it has itself maximum degree in $f$, contain at least one exponent $d_i'$ with $d_i'<d_i$. Therefore
\begin{align}
& \sum_{(s_{1},\dotsc,s_{n})\in S_1\times \dotsm \times S_n} \lambda_{s_1}^{(S_1)}\dotsm\lambda_{s_n}^{(S_n)} \cdot f(s_{1},\dotsc,s_{n}) 
\eqBy{\eqref{def:koubaMap}} 
\Phi(f) \eqBy{\eqref{eq:smallExponent},\eqref{eq:productExpansion}}
 c\cdot \Phi(x_1^{d_1}\dotsm x_{n}^{d_n} ) = \notag\\
&  \eqBy{\eqref{eq:matchingExponents},\eqref{eq:productExpansion}} 
c\cdot \prod_{k=1}^n \prod_{\{s,s'\} \in \genfrac{(}{)}{0pt}{}{S_k}{2}} (s-s') = c\cdot \prod_{k=1}^n r_k \neq 0,
\end{align}
since $R$ has no zero divisors. Obviously this implies that there exists at least one point $(s_1,\dotsc, s_n)\in S_1\times\dotsm \times S_n$ where 
$f$ does not vanish.
\end{proof}

\section{Concluding question}
Is there any interesting use for the fact that even in the 
case of integral domains the coefficients of Kouba's map can be explicitly 
expressed in terms of cofactors of the matrices $(s^{i-1}_j)$?

\section*{Acknowledgement} The author is very grateful to the department M9 of 
Technische Universit\"{a}t M\"{u}nchen for excellent working conditions.
%%%%%%%%%%%%%%%%%%%%%%%%%%%%%%%%%%%%%%%%%%%%%%

\bibliographystyle{amsplain} 
\bibliography{HEINIGnullstellensatzInSpiritOfKouba}

%%%%%%%%%%%%%%%%%%%%%%%%%%%%%%%%%%%%%%%%%%%%%

\end{document}